\documentclass[11pt]{amsart}

\usepackage{amsmath,amssymb,amsthm,mathtools}
\usepackage[colorlinks=true,linkcolor=blue,citecolor=blue,urlcolor=blue]{hyperref}

\theoremstyle{plain}
\newtheorem{theorem}{Theorem}
\newtheorem{lemma}{Lemma}
\newtheorem{proposition}{Proposition}

\theoremstyle{definition}
\newtheorem{definition}{Definition}

\theoremstyle{remark}
\newtheorem{remark}{Remark}

\numberwithin{equation}{section}

\newcommand{\R}{\mathbb R}
\newcommand{\N}{\mathbb N}
\newcommand{\E}{\mathbb E}
\newcommand{\1}{\mathbf 1}
\DeclareMathOperator{\sgn}{sgn}

\title[What to Expect When You're Expecting]{What to Expect When You're Expecting}
\author{Mark Whitmeyer}
\date{\today}
\subjclass[2020]{Primary 60A10; Secondary 60E07}
\thanks{\emph{Acknowledgements.} Dedicated to KS. I thank Joseph Whitmeyer for his comments. I used ChatGPT as one would an RA and solicited feedback from \href{refine.ink}{refine.ink}.}

\begin{document}

\begin{abstract}
The marginal degree of sums in dimension \(n\) is the smallest integer \(k\) such that the joint distributions of all subcollections of at most \(k\) coordinates of a real-valued random vector \(\left(X_1,\ldots,X_n\right)\) determine the value of
\(\E\left(X_1+\cdots+X_n\right)\), whenever this expectation is defined. For every \(n\ge2\), we prove that this marginal degree is
\(\left\lceil n/2\right\rceil\). The upper bound follows from a theorem of Simons \cite{Simons1977Unexpected}. The lower bound is proved by constructing, for every
\(1\le k<\left\lceil n/2\right\rceil\), two joint laws whose marginals of dimension at most \(k\) agree, but for which the corresponding expectations of \(X_1+\cdots+X_n\) are defined and unequal.
\end{abstract}

\maketitle

\section{The marginal degree of sums}
Simons \cite{Simons1977Unexpected} revealed that the obviously true fact that the value of the expectation \(\E\left[X+Y+Z\right]\) depends on \(X\), \(Y\), and \(Z\) only through their marginal distributions is actually false, although this paradoxical behavior cannot manifest for \(\E\left[X+Y\right]\). Simons concludes his paper by noting that \(\E\left[X+Y+Z\right]\) depends on \(X\), \(Y\), and \(Z\) only through the marginal distribution of \(Z\) and the joint  distribution of \(X\), \(Y\); and similarly for four random variables. This begs the question corresponding to the abstract: what is the smallest integer \(k\) such that the joint distributions of all subcollections of at most \(k\) coordinates of a real-valued random vector \(\left(X_1,\ldots,X_n\right)\) determine the value of
\(\E\left(X_1+\cdots+X_n\right)\), whenever this expectation is defined?

Throughout, \(\N\coloneqq\left\{0,1,2,\ldots\right\}\). For a real-valued random variable \(W\), we say that the extended expectation \(\E W\) is defined if at least one of \(\E W^+\) and \(\E W^-\) is finite.\footnote{Recall that \(W^+\coloneqq\max\left\{W,0\right\}\) and \(W^-\coloneqq\max\left\{-W,0\right\}\).} In that case \(\E W\coloneqq\E W^+-\E W^-\), with the usual convention allowing the values \(\pm\infty\).

\begin{definition}
Let \(n\ge2\). For \(1\le k\le n\), say that the extended expectation of the sum is determined by \(k\)-marginals if the following holds: whenever \(X=\left(X_1,\ldots,X_n\right)\) and \(Y=\left(Y_1,\ldots,Y_n\right)\) are random vectors in \(\R^n\) such that \(\left(X_i\right)_{i\in S}\stackrel{d}=\left(Y_i\right)_{i\in S}\) for every \(S\subseteq\left[n\right]\) with \(\lvert S\rvert\le k\), and both \(\E\sum_{i=1}^nX_i\) and \(\E\sum_{i=1}^nY_i\) are defined, then
\[
\E\sum_{i=1}^nX_i=\E\sum_{i=1}^nY_i.
\]
We term the least such \(k\) the marginal degree of sums in dimension \(n\) and denote it by \(\sigma\left(n\right)\).
\end{definition}

Our main theorem is
\begin{theorem}
For every \(n\ge2\), \(\sigma\left(n\right)=\left\lceil\frac{n}{2}\right\rceil\).
\end{theorem}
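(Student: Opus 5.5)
For the upper bound we use Simons' theorem in its two-variable form, as recalled in the introduction: if \(U\stackrel{d}=U'\), \(V\stackrel{d}=V'\), and both \(\E(U+V)\) and \(\E(U'+V')\) are defined, then they are equal. Fix \(n\ge2\), put \(k=\lceil n/2\rceil\), and split \([n]=A\sqcup B\) with \(\lvert A\rvert,\lvert B\rvert\le k\). Set \(U=\sum_{i\in A}X_i\), \(V=\sum_{i\in B}X_i\), and define \(U',V'\) from \(Y\) in the same way. Since \(\lvert A\rvert\le k\), we have \((X_i)_{i\in A}\stackrel{d}=(Y_i)_{i\in A}\), so \(U\stackrel{d}=U'\); likewise \(V\stackrel{d}=V'\). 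Because \(\sum_iX_i=U+V\) and \(\sum_iY_i=U'+V'\), Simons' theorem gives equality of the expectations. Hence \(\sigma(n)\le\lceil n/2\rceil\).

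For the lower bound, first we reduce to odd dimension. Any counterexample in dimension \(m\) extends to every \(n\ge m\) by adjoining \(n-m\) coordinates that are identically \(0\) in both laws. This leaves all \(k\)-marginals equal and does not change the sums. Now \(k<\lceil n/2\rceil\) is equivalent to \(n\ge 2k+1\). So it suffices to construct, for each \(k\ge1\), two laws on \(\R^{2k+1}\) with equal \(k\)-marginals whose sum-expectations are defined and unequal. The case \(k=1\), \(n=3\), is Simons' example, and the plan is to generalize its mechanism.

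The construction will be a difference measure. Take a finite-difference signed measure on \(\Z^{2k+1}\), for instance a product of the form
\[
\prod_{i=1}^{2k+1}\left(\delta_{0}-\delta_{t_ie_i}\right)
\]
suitably symmetrized, and randomize the scale \(t\) along a heavy-tailed sequence \(t=2^m\) with weights \(c\,2^{-m}\). A signed measure of this type has vanishing projection onto any set of coordinates that omits at least one factor. To get vanishing \(k\)-marginals I would group the \(2k+1\) coordinates so that every \(k\)-subset misses a full difference factor. The candidate is a design in the spirit of parity or \(k\)-wise independent sign vectors: a vector \(\varepsilon\in\{\pm1\}^{2k+1}\) uniform subject to a constraint, against \(\varepsilon\) uniform. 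Such pairs of laws have identical marginals up to order \(2k\), and hence up to order \(k\). Writing \(\mu=P+\nu\) and \(\mu'=P+\nu'\) with \(\nu-\nu'\) equal to this signed measure, we obtain two probability laws with identical \(k\)-marginals.

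The main obstacle is the last step: arranging that the two sum-expectations are defined but differ. Linearity shows that with integrable coordinates they would coincide. So the coordinates must be non-integrable, and the sums must be non-integrable in a controlled way: say \(\sum_iX_i\ge -C\) a.s. with \(\E\sum_iX_i=+\infty\) under one law, and \(\sum_iY_i\) integrable, even bounded, under the other. Simons' trick for \(n=3\) does exactly this. Each coordinate carries a heavy-tailed term \(\pm N\), and these terms cancel in the sum under one coupling but add to \(+N\) under the other, while the pairwise laws cannot detect the difference. The generalization I would try first is as follows. Let \(X_i=\eta_i N\), where \(N\ge1\) satisfies \(\E N=\infty\) and \(\eta\in\{-1,0,1\}^{2k+1}\) is independent of \(N\). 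Then look for two laws of \(\eta\) with identical \(k\)-marginals such that \(\sum_i\eta_i\equiv0\) under one law and \(\sum_i\eta_i\ge0\), with positive probability of being strictly positive, under the other. For a single common \(N\) this fails, since it would force \(\E\sum_i\eta_i\) to differ while the \(1\)-marginals agree. The fix is to let different coordinates use different, dependent scales \(N_1,\dots,N_r\), so that the non-integrable parts cancel pathwise in one law only. I would solve the resulting finite linear-feasibility problem for the law of the pattern by induction on \(k\), taking the Simons example as the base case, and the bulk of the work lies there.
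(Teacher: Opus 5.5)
Your upper bound matches the paper's argument, and your zero-padding reduction to $n=2k+1$ is the paper's even-dimensional lemma. The gap is the lower bound, which carries the real content of the theorem. You never exhibit, for general $k$, two laws on $\R^{2k+1}$ with equal $k$-marginals and defined but unequal sum-expectations. The step you defer (``solve the resulting finite linear-feasibility problem \ldots\ by induction on $k$'') is left unspecified.

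The concrete designs you do name cannot work. The product $\prod_i(\delta_0-\delta_{t_ie_i})$ as written has vanishing projections onto every proper subset of coordinates, and the parity design gives, as you say, identical marginals up to order $2k$. But $2k\ge k+1=\lceil(2k+1)/2\rceil$. So by the upper bound you just proved, two laws with equal $2k$-marginals have equal sum-expectations whenever these are defined, however heavy the tails. A counterexample needs equal $k$-marginals while, for every split $[n]=A\sqcup B$ with $\lvert A\rvert=k+1$, the laws of $\sum_{i\in A}X_i$ differ.

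Your heuristics also point the wrong way. Neither sum needs to be non-integrable, and the cancellation need not occur ``in one law only'': in the paper both coordinate sums are a.s.\ constant.

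What is missing is a mechanism that turns a finite signed design into an expectation gap while keeping both objects genuine probability laws. The paper places the signed object on the L\'evy measures of totally skewed $1$-stable laws, not on the laws themselves. With $\Phi(u)=\lvert u\rvert+icu\log\lvert u\rvert$ one has $\Phi(2u)=2\Phi(u)+2icu\log2$. Suppose every $k$-projection of a signed measure $\Lambda$ on the zero-sum hyperplane is a combination of $\delta_{2\1_R}-2\delta_{\1_R}$. Then the $k$-marginal characteristic exponents of the laws built from $\Lambda^+$ and from $\Lambda^-$ differ only by a linear drift. That drift is cancelled by choosing location vectors with $\gamma^+-\gamma^-=c\ell$. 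Positivity is automatic because $\Lambda^\pm$ are used separately. Since both are supported on $\{\sum_ia_i=0\}$, the coordinate sums equal $\gamma^\pm\cdot\1$ a.s., so the gap is exactly $c\ell\cdot\1$.

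Two nontrivial ingredients remain, and your plan supplies neither:
\begin{itemize}
\item the finite extension lemma: the prescribed pattern $q$ is the $k$-marginal of a signed combination of zero-sum words of length $2k+1$ over $\{-2,\dots,2\}$, proved by linear duality and a polynomial interpolation argument on the zero-sum slice;
\item the anomaly identity $\sum_r rd_r=(k+2)/(2k+1)\ne0$, which guarantees $\ell\cdot\1\ne0$.
\end{itemize}
Your dyadic scales with weights $2^{-m}$ suggest the right tail index. But the cancellation must happen in the characteristic exponent, not in a mixture $P+\nu$ of laws, where positivity and exact equality of marginals would still have to be proved.
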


The proof has two parts. First we prove the upper bound, which is a direct consequence of Simons's two-variable theorem \cite{Simons1977Unexpected}. Then we prove the lower bound by constructing, whenever \(k<\left\lceil n/2\right\rceil\), two joint distributions whose marginals of dimension at most \(k\) agree, but for which the expectations of the sums are defined and unequal.

\subsection{The upper bound}

We first recall the two-variable theorem of Simons.

\begin{lemma}[Simons \cite{Simons1977Unexpected}]
Let \(X,Y\) be real random variables. If \(\E\left(X+Y\right)\) is defined, then its value depends only on the marginal laws of \(X\) and \(Y\).
\end{lemma}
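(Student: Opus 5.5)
The plan is to express \(\E\left(X+Y\right)\) as a limit of quantities that visibly depend only on the marginal laws. I will use clipping rather than indicator truncation. For \(t>0\) let \(c_t\left(x\right)\coloneqq\max\left\{-t,\min\left\{x,t\right\}\right\}\), and set \(S_t\coloneqq c_t\left(X\right)+c_t\left(Y\right)\) and \(Z\coloneqq X+Y\). Since \(c_t\) is bounded, \(\E S_t=\E c_t\left(X\right)+\E c_t\left(Y\right)\) is finite, and each summand is a functional of one marginal law only. It therefore suffices to show \(\E S_t\to\E Z\) as \(t\to\infty\) whenever \(\E Z\) is defined.

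The key step is a pointwise sandwich: \(S_t\) always lies between \(0\) and \(Z\). Precisely, if \(x+y\ge0\) then \(0\le c_t\left(x\right)+c_t\left(y\right)\le x+y\), and symmetrically when \(x+y\le0\).

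For the lower bound, use that \(c_t\) is nondecreasing and odd. From \(y\ge -x\) we get \(c_t\left(y\right)\ge c_t\left(-x\right)=-c_t\left(x\right)\).

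For the upper bound, write \(S_t-Z=\left(c_t\left(x\right)-x\right)+\left(c_t\left(y\right)-y\right)\) and check signs.
\begin{itemize}
\item If \(x,y\ge0\), both terms are \(\le0\).
\item If \(x\ge0\ge y\), the first term is \(-\left(x-t\right)^+\) and the second is \(\left(-y-t\right)^+\). This is at most \(\left(x-t\right)^+\) because \(-y\le x\).
\item The case \(y\ge0\ge x\) is the same with the roles of \(x\) and \(y\) exchanged.
\end{itemize}
Consequently \(S_t^+\le Z^+\) and \(S_t^-\le Z^-\) pointwise. Moreover \(S_t\to Z\) pointwise, because \(c_t\left(u\right)=u\) once \(t\ge\lvert u\rvert\). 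I expect this sandwich to be the main point of the argument. Naive truncation \(X\1\left\{\lvert X\rvert\le t\right\}\) has no such domination, which matters in examples like \(Y=-X\) with \(X\) Cauchy.

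To finish, treat \(\E S_t^+\) and \(\E S_t^-\) separately. If \(\E Z^+<\infty\), dominated convergence with dominating function \(Z^+\) gives \(\E S_t^+\to\E Z^+\). If \(\E Z^+=\infty\), Fatou's lemma gives \(\liminf\E S_t^+\ge\E Z^+=\infty\). The same reasoning applies to \(S_t^-\) and \(Z^-\). Since \(\E Z\) is defined, at least one of \(\E Z^\pm\) is finite. Hence \(\E S_t=\E S_t^+-\E S_t^-\) converges to \(\E Z^+-\E Z^-=\E Z\), including the cases \(\pm\infty\). So
\[
\E\left(X+Y\right)=\lim_{t\to\infty}\left(\E c_t\left(X\right)+\E c_t\left(Y\right)\right),
\]
and the right-hand side depends only on the marginal laws of \(X\) and \(Y\).
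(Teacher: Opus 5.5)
Your proof is correct and is essentially the paper's argument: clip at \(\pm t\), use linearity for the bounded clipped variables, and pass to the limit separately in the positive and negative parts. The only difference is that you use the pointwise domination \(S_t^\pm\le Z^\pm\) together with dominated convergence and Fatou, whereas the paper shows \(S_t^\pm\uparrow Z^\pm\) monotonically and invokes monotone convergence. One small aside: your \(Y=-X\) Cauchy example does not illustrate a failure of naive truncation, since there \(X\1\{\lvert X\rvert\le t\}+Y\1\{\lvert Y\rvert\le t\}\equiv0\); but that remark plays no role in your argument.
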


\begin{proof}
This is Simons's theorem \cite[Theorem, p.~157]{Simons1977Unexpected}; for completeness we include the short proof. Equivalently, we must show that if \(\left(X,Y\right)\) and \(\left(X',Y'\right)\) have the same one-dimensional marginals and both \(\E\left(X+Y\right)\) and \(\E\left(X'+Y'\right)\) are defined, then \(\E\left(X+Y\right)=\E\left(X'+Y'\right)\).

For \(c>0\), let \(W^{\left(c\right)}\coloneqq\max\left\{-c,\min\left\{W,c\right\}\right\}\) be the truncation of a random variable \(W\) to the interval \(\left[-c,c\right]\). The truncated variables are bounded, so the linearity of expectation for bounded random variables yields \(\E\left(X^{\left(c\right)}+Y^{\left(c\right)}\right)=\E X^{\left(c\right)}+\E Y^{\left(c\right)}\). Since \(\left(X,Y\right)\) and \(\left(X',Y'\right)\) have the same one-dimensional marginals, for every \(c>0\),
\[
\E X^{\left(c\right)}+\E Y^{\left(c\right)}=\E {X'}^{\left(c\right)}+\E {Y'}^{\left(c\right)}=\E\left({X'}^{\left(c\right)}+{Y'}^{\left(c\right)}\right).
\]

It remains to pass to the limit as \(c\to\infty\). We first note the following elementary fact. For fixed real numbers \(x,y\), let
\[
x_c\coloneqq\max\left\{-c,\min\left\{x,c\right\}\right\}
\qquad \text{and} \qquad
y_c\coloneqq\max\left\{-c,\min\left\{y,c\right\}\right\}.
\]
Then,
\[
\left(x_c+y_c\right)^+\uparrow\left(x+y\right)^+
\qquad \text{and} \qquad
\left(x_c+y_c\right)^-\uparrow\left(x+y\right)^-.
\]
Indeed, this is immediate when \(x\) and \(y\) have the same sign. If
\(x\ge0\ge y\), write \(a\coloneqq x\) and \(b\coloneqq-y\). Then
\(x_c+y_c=\min\left\{a,c\right\}-\min\left\{b,c\right\}\), from which the claim follows according as \(a\ge b\) or \(a\le b\). The case \(y\ge0\ge x\) is the same.

Applying this observation with \(x=X\left(\omega\right)\) and
\(y=Y\left(\omega\right)\), and then using the monotone convergence theorem, we get
\[
\E\left[\left(X^{\left(c\right)}+Y^{\left(c\right)}\right)^+\right]
\to
\E\left(X+Y\right)^+
\quad \text{and} \quad
\E\left[\left(X^{\left(c\right)}+Y^{\left(c\right)}\right)^-\right]
\to
\E\left(X+Y\right)^-.
\]
Since \(\E\left(X+Y\right)\) is defined, these two limiting quantities are not
both infinite. Hence,
\[
\E\left(X^{\left(c\right)}+Y^{\left(c\right)}\right)
\to
\E\left(X+Y\right)
\]
as an element of \(\left[-\infty,\infty\right]\). The same argument, applied
to \(X'\) and \(Y'\), yields
\[
\E\left({X'}^{\left(c\right)}+{Y'}^{\left(c\right)}\right)
\to
\E\left(X'+Y'\right).
\]
But for every \(c>0\),
\[
\E\left(X^{\left(c\right)}+Y^{\left(c\right)}\right)
=
\E\left({X'}^{\left(c\right)}+{Y'}^{\left(c\right)}\right).
\]
Passing to the limit produces \(\E\left(X+Y\right)=\E\left(X'+Y'\right)\).\end{proof}

\begin{remark}
The lemma concerns the value of \(\E\left(X+Y\right)\) once that extended expectation is defined. As Simons points out, the existence of \(\E\left(X+Y\right)\) is not itself determined by the one-dimensional marginals alone \cite[p.~157]{Simons1977Unexpected}.
\end{remark}

\begin{lemma}[Upper bound]
For every \(n\ge2\), \(\sigma\left(n\right)\le\left\lceil n/2\right\rceil\).
\end{lemma}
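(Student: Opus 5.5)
We will verify that the extended expectation of the sum is determined by \(k\)-marginals when \(k=\lceil n/2\rceil\). Fix this \(k\) and split the index set as \([n]=A\sqcup B\) with \(A=\{1,\ldots,k\}\) and \(B=\{k+1,\ldots,n\}\). Then \(\lvert A\rvert=k\) and \(\lvert B\rvert=n-k\le k\). Put \(U\coloneqq\sum_{i\in A}X_i\) and \(V\coloneqq\sum_{i\in B}X_i\), and define \(U'\), \(V'\) in the same way from \(Y\). Then \(\sum_{i=1}^nX_i=U+V\) and \(\sum_{i=1}^nY_i=U'+V'\), and both extended expectations \(\E(U+V)\) and \(\E(U'+V')\) are defined by hypothesis.

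The key observation is that \(U\) is a measurable function of \((X_i)_{i\in A}\), and \(U'\) is the same function of \((Y_i)_{i\in A}\). Since \(\lvert A\rvert\le k\), the hypothesis gives \((X_i)_{i\in A}\stackrel{d}=(Y_i)_{i\in A}\), and hence \(U\stackrel{d}=U'\). In the same way \(\lvert B\rvert\le k\) yields \(V\stackrel{d}=V'\). The pairs \((U,V)\) and \((U',V')\) therefore have the same one-dimensional marginals. Simons's lemma above then gives \(\E(U+V)=\E(U'+V')\), which is the required equality. This shows that \(\lceil n/2\rceil\) belongs to the set of admissible \(k\), and so \(\sigma(n)\le\lceil n/2\rceil\).

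The argument has no real obstacle. The only points that need checking are that \(n-\lceil n/2\rceil\le\lceil n/2\rceil\), so that both blocks have size at most \(k\), and that for \(n\ge2\) both blocks are nonempty, which holds because \(1\le k<n\). If one prefers not to rely on that, note that Simons's lemma applies trivially when one summand is identically zero. We also need that equality in law is preserved under measurable maps, so that the sums over each block have equal laws. Finally, the lemma applies to the extended expectation exactly as stated: its hypothesis is that \(\E(U+V)\) is defined, and this is what we are given.
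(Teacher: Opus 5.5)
Your proposal is correct and follows the paper's proof: you split \([n]\) into two blocks of size at most \(\lceil n/2\rceil\), note that the two block sums have matching laws, and then apply Simons's two-variable lemma to the pair of block sums. The extra checks you include (block sizes, and equality in law being preserved under measurable maps) are exactly the details that the paper's argument uses implicitly.
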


\begin{proof}
Let \(m\coloneqq\left\lceil n/2\right\rceil\). Partition \(\left[n\right]\) into two sets \(A,B\) with \(\lvert A\rvert,\lvert B\rvert\le m\). For a random vector \(X=\left(X_1,\ldots,X_n\right)\), define \(U_X\coloneqq\sum_{i\in A}X_i\) and \(V_X\coloneqq\sum_{j\in B}X_j\), so that \(\sum_{i=1}^nX_i=U_X+V_X\).

Suppose \(X\) and \(Y\) have the same \(m\)-marginals and that both coordinate-sum expectations are defined. Since \(\lvert A\rvert,\lvert B\rvert\le m\), the marginal laws of \(\left(X_i\right)_{i\in A}\) and \(\left(Y_i\right)_{i\in A}\) agree, and similarly for \(B\). Hence, \(U_X\stackrel{d}=U_Y\) and \(V_X\stackrel{d}=V_Y\). By Simons's theorem, \(\E\left(U_X+V_X\right)=\E\left(U_Y+V_Y\right)\). Therefore, the expectation of the coordinate sum is determined by \(m\)-marginals.
\end{proof}

\subsection{The finite construction in odd dimension}

We begin by constructing the finite signed object used in the odd-dimensional counterexample. The construction has two stages. First, we build a finite signed exchangeable measure on zero-sum vectors with entries in (words over) a small alphabet. Its \(k\)-dimensional marginals will have a carefully chosen signed pattern. Later, we will convert this signed object into two probability laws with the same \(k\)-marginals but different expectations of their sums.

Fix an integer \(k\ge1\) and set \(n\coloneqq2k+1\). Let
\(A\coloneqq\left\{-2,-1,0,1,2\right\}\). For a count vector \(N=\left(N_{-2},N_{-1},N_0,N_1,N_2\right)\in\N^5\), define
\(\left\lvert N\right\rvert\coloneqq\sum_{a\in A}N_a\) and
\[\omega\left(N\right) \coloneqq \sum_{a\in A}aN_a = -2N_{-2}-N_{-1}+N_1+2N_2,\]
i.e., \(\left|N\right|\) is the number of letters in the word, while \(\omega(N)\) is the sum of its letters. Let \(\mathcal Z_n \coloneqq \left\{N\in\N^5\colon\left\lvert N\right\rvert=n,\omega\left(N\right)=0\right\}\). This is the set of count vectors for length-\(n\) words over \(A\) whose letters sum to zero.

We next prescribe the signed pattern we want to see in every \(k\)-coordinate marginal. For \(1\le r\le k\), let
\(m_r^{\left(2\right)}\coloneqq\left(0,0,k-r,0,r\right)\) be the
\(k\)-count type (that is, a count vector whose entries sum to \(k\)) with \(k-r\) zeros and \(r\) copies of \(2\), and let
\(m_r^{\left(1\right)}\coloneqq\left(0,0,k-r,r,0\right)\) be the
\(k\)-count type with \(k-r\) zeros and \(r\) copies of \(1\). Define
\[
d_r\coloneqq
\left(-1\right)^{r-1}
\frac{\binom{2k+1}{k-r}}{r\binom{2k+1}{k-1}}.
\]
Let \(q\) be the signed measure on \(k\)-count types given by
\(q\left(m_r^{\left(2\right)}\right)=d_r\) and
\(q\left(m_r^{\left(1\right)}\right)=-2d_r\) for \(1\le r\le k\), and
\(q\left(m\right)=0\) for all other \(k\)-count types \(m\).

The role of \(q\) is to encode the desired \(k\)-marginal behavior. The next lemma says that this local signed pattern is globally consistent: it can be realized by signed weights on \(\mathcal Z_n\), equivalently, by a signed exchangeable measure that assigns weight only to words whose letters sum to zero.

\begin{lemma}[Finite extension lemma]\label{lem:finex}
There exist signed weights \(\left(W_N\right)_{N\in\mathcal Z_n}\) such that, for every \(k\)-count type \(m\),
\[
q\left(m\right)=\sum_{N\in\mathcal Z_n}W_N\prod_{a\in A}\binom{N_a}{m_a}.
\]
\end{lemma}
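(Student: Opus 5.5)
The statement is a linear-algebra assertion: \(q\) must lie in the image of the linear map \(T\colon\R^{\mathcal Z_n}\to\R^{\{k\text{-count types}\}}\) given by \((TW)(m)=\sum_{N\in\mathcal Z_n}W_N\prod_{a}\binom{N_a}{m_a}\). I would prove this by duality rather than by writing \(W\) down. By finite-dimensional duality, \(q\in\operatorname{im}T\) if and only if \(\sum_m f(m)q(m)=0\) for every \(f\) in \(\ker T^{*}\). Here \(f\in\ker T^*\) means that
\[
P_f(N)\coloneqq\sum_{\lvert m\rvert=k}f(m)\prod_{a\in A}\binom{N_a}{m_a}
\]
vanishes at every \(N\in\mathcal Z_n\).

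The first step is to understand the space of polynomials \(\{P_f\}\). By Vandermonde's identity, on the hyperplane \(\lvert N\rvert=n\) the functions \(N\mapsto\prod_a\binom{N_a}{m_a}\) with \(\lvert m\rvert=k\) also generate the analogous products with \(\lvert m\rvert<k\). Hence \(\{P_f\}\) is exactly the space of polynomials of degree \(\le k\) in \(N\), restricted to \(\{\lvert N\rvert=n\}\). Moreover \(f\mapsto P_f|_{\{\lvert N\rvert=n\}}\) is injective, since the monomial-type binomial products of top degree are independent.

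The second step is to identify which such polynomials vanish on \(\mathcal Z_n\). I would prove that a polynomial of degree \(\le k\) on \(\{\lvert N\rvert=n\}\) vanishing on \(\mathcal Z_n\) is divisible by \(\omega(N)\). More precisely, it has the form \(\omega(N)\,g(N)\) with \(\deg g\le k-1\), modulo \(\lvert N\rvert-n\). The argument I plan uses the fact that \(n=2k+1\) is large relative to \(k\). Parametrize \(\{\lvert N\rvert=n\}\) by \((N_{-2},N_{-1},N_1,N_2)\) and solve \(\omega=0\) for one variable. The set \(\mathcal Z_n\) then contains a "staircase" of lattice points in the three remaining free variables that is rich enough (at least \(k+1\) values along each line in a suitable triangular grid) to force a degree-\(\le k\) polynomial vanishing on it to vanish on the whole plane \(\{\omega=0\}\). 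The standard unisolvence argument for simplicial grids then gives divisibility by \(\omega\).

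The third step is to verify that \(q\) annihilates everything of the form \(\omega\cdot g\). Take \(g(N)=\prod_a\binom{N_a}{\ell_a}\) with \(\lvert\ell\rvert\le k-1\), which spans by the first step. I would rewrite \(\omega(N)\prod_a\binom{N_a}{\ell_a}\) in the binomial basis using \(N_a\binom{N_a}{\ell_a}=(\ell_a+1)\binom{N_a}{\ell_a+1}+\ell_a\binom{N_a}{\ell_a}\), and then lift to degree exactly \(k\) using \(\lvert N\rvert=n\) and Vandermonde. This expresses the corresponding \(f\) explicitly, and \(\langle f,q\rangle\) involves only the values of \(f\) at \(m_r^{(1)},m_r^{(2)}\).

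Only \(\ell\) supported on \(\{0,1\}\) or on \(\{0,2\}\) can contribute. The weights \(a=1\) and \(a=2\) in \(\omega\) interact with the prescribed ratio \(q(m_r^{(1)})=-2q(m_r^{(2)})\), so the \(\omega\)-term produces a telescoping alternating sum in \(r\). This sum should vanish precisely because \(d_r\propto(-1)^{r-1}\binom{2k+1}{k-r}/r\); the relevant ingredient is an identity of the form \(\sum_r(-1)^{r}\binom{2k+1}{k-r}\binom{\cdot}{\cdot}=0\), coming from \(\binom{2k+1}{k-r}=\binom{2k+1}{k+r+1}\) symmetry.

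I expect this last computation — producing the explicit \(f\) for \(\omega g\) and checking the alternating binomial identity — to be the main obstacle. If it becomes unwieldy, I would instead use the generating-function formulation. There the condition reads: the degree-\(k\) part in \(t\) of \(\sum_N W_N\prod_a(1+tx_a)^{N_a}\) equals \(\sum_r d_r\,x_0^{k-r}(x_2^r-2x_1^r)\). I would then try an explicit \(W\) supported on types built from \(\{0,2,-1\}\) and \(\{0,1,-1\}\) (and \(\{0,2,-2\}\), \(\{0,1,-2\}\) as needed), solving a triangular system in \(r\) to kill all terms containing negative letters.
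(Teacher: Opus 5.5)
Your architecture matches the paper's. Duality reduces the lemma to showing \(\langle q,\varphi\rangle=0\) whenever \(P_\varphi\) vanishes on \(\mathcal Z_n\). Such a \(P_\varphi\) is then shown to vanish on the whole real slice \(\{\lvert N\rvert=n,\ \omega(N)=0\}\), hence to equal \(\omega R\) on \(\{\lvert N\rvert=n\}\) with \(\deg R\le k-1\), and \(q\) is checked to kill these.

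The gap is your second step. It is the only place where \(n=2k+1\) enters, and the margin there is exact rather than a matter of \(n\) being ``large''. Your third step works for every \(n\) (with \(\binom{n}{k-r}\) in \(d_r\)). So if the interpolation statement held for \(n=2k\), the rest of the paper would give \(\sigma(2k)>k\), contradicting the upper bound.

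Read literally, the configuration you invoke does not exist. Suppose an affine copy of the simplicial grid \(\{(x,y,z)\in\N^3\colon x+y+z\le k\}\) lay in \(\mathcal Z_n\). Its step vectors would be integral, so its four corners would be distinct points of \(\mathcal Z_n\) congruent modulo \(k\). But for \(k=2\), no parity class of the twelve points of \(\mathcal Z_5\) has more than three members; the largest is \(\{(0,0,5,0,0),(2,0,1,0,2),(0,2,1,2,0)\}\).

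What does work is a layered (Berzolari--Radon) configuration. For \(j\ge0\), on the level set \(N_2-N_{-2}=j\) of the slice take the points \((x,\,y+2j,\,n-2x-2y-3j,\,y,\,x+j)\). On \(N_2-N_{-2}=-j\) take the points \((x+j,\,y,\,n-2x-2y-3j,\,y+2j,\,x)\). In both cases \(x,y\in\N\) and \(x+y\le\lfloor(n-3j)/2\rfloor\). Each is a planar triangular grid in \(\mathcal Z_n\). For \(n=2k+1\), their sides in the order of levels \(0,1,-1,2,-2,\dots\) are \(k,k-1,k-1,k-3,k-3,k-4,k-4,\dots\), which dominate \(k,k-1,k-2,\dots,0\) termwise. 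Dividing out one affine factor \(N_2-N_{-2}-z\) per level \(z\) then forces \(P_\varphi\) to vanish on the slice. For \(n=2k\) the side at levels \(\pm1\) drops to \(k-2\), as it must.

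The paper proves the same interpolation fact differently: by induction on the degree, using difference operators \(P(\cdot+p)-P(\cdot+q)\) along three spanning lattice directions that pass between the slices \(\omega=b\), \(b\in\{-2,\dots,2\}\). Either route is fine, but one of them has to be carried out. As written, the step rests on an object that is not there.

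Your third step is sound but unfinished, and the identity you anticipate is not the operative one. It needs injectivity of \(f\mapsto P_f|_{\{\lvert N\rvert=n\}}\). This holds by a dimension count (both sides have dimension \(\binom{k+4}{4}\), and you have surjectivity), not by independence of top-degree parts.

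Write \(\binom Nm\coloneqq\prod_a\binom{N_a}{m_a}\). For \(\ell=je_0+ie_c\) with \(c\in\{1,2\}\) and \(i\ge1\), everything reduces to \(\sum_r\binom{k-r}{j}\binom{r}{i}d_r=\frac{(-1)^{i-1}\binom nj\binom{n-i-j}{k-i-j}}{i\binom n{k-1}}\). After that, the contributions of \(\binom N{\ell+e_c}\) and \(\binom N\ell\) cancel exactly. For \(\ell=je_0\), the two families cancel through the ratio \(-2:1\). This uses the shape \((-1)^{r-1}\binom n{k-r}/r\) of \(d_r\), but neither \(n=2k+1\) nor the symmetry \(\binom{2k+1}{k-r}=\binom{2k+1}{k+r+1}\).

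The paper avoids the basis change altogether. Note that \(\binom n{k-1}d_r\) is \(\binom n{k-r}\) times the derivative of \(\binom sr\) at \(s=0\). So, given \(f_0=0\), \(\langle q,\varphi\rangle\) is a combination of the derivatives of \(P_\varphi\) at \(ne_0\) along the lines \((0,0,n-s,0,s)\) and \((0,0,n-s,s,0)\). When \(P_\varphi=\omega R\), these derivatives equal \(2R(ne_0)\) and \(R(ne_0)\), and the weights \(d_r,-2d_r\) cancel them.
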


\begin{proof}
Throughout this proof, for \(r\ge0\) we regard \(\binom{x}{r}\) as a polynomial in \(x\), with the convention \(\binom{x}{0}=1\).

Let \(\mathcal M_k\) denote the set of all \(k\)-count types over \(A\). Define the linear map \(K\colon\R^{\mathcal Z_n}\to\R^{\mathcal M_k}\) by
\[
\left(KW\right)\left(m\right)=\sum_{N\in\mathcal Z_n}W_N\prod_{a\in A}\binom{N_a}{m_a}.
\]
We need to show that \(q\in\operatorname{im}K\).

By finite-dimensional duality, \(\operatorname{im}K=\left(\ker K^*\right)^\perp\) with respect to the pairings
\[
\langle W,G\rangle\coloneqq\sum_{N\in Z_n}W_NG(N)
\qquad \text{and} \qquad
\langle q,\varphi\rangle\coloneqq\sum_{m\in M_k}q(m)\varphi(m).
\]
Consequently, it suffices to show that \(\langle q,\varphi\rangle=0\) for every \(\varphi\in\mathbb R^{M_k}\) with \(K^*\varphi=0\). For such a \(\varphi\), define
\[
P_\varphi(N)\coloneqq\sum_{m\in M_k}\varphi(m)\prod_{a\in A}\binom{N_a}{m_a}.
\]
The adjoint is given by \((K^*\varphi)(N)=P_\varphi(N)\), so the condition \(K^*\varphi=0\) says precisely that \(P_\varphi(N)=0\) for every \(N\in Z_n\).

We need a small interpolation fact. Let \(L\in\N\) and \(b\in\left\{-2,-1,0,1,2\right\}\). If a polynomial \(P\) of degree at most \(L\) vanishes on all integer points
\[
\left\{N\in\N^5\colon\lvert N\rvert=2L+1,\omega\left(N\right)=b\right\},
\]
then \(P\) vanishes identically on the affine space
\[
\left\{N\in\R^5\colon\lvert N\rvert=2L+1,\omega\left(N\right)=b\right\}.
\]

We prove this interpolation fact by induction on \(L\). For \(L=0\), \(P\) is constant, and the assertion is immediate. Assume \(L\ge1\). Let \(e_a\) denote the unit vector in the \(a\)-coordinate. Consider the directions \(u\coloneqq e_{-2}+e_2-2e_0\) and \(v\coloneqq e_{-1}+e_1-2e_0\). If \(b\ge0\), also use \(w\coloneqq e_{-1}+e_2-e_0-e_1\). If \(b<0\), use instead \(w'\coloneqq e_{-2}+e_1-e_{-1}-e_0\). The three chosen directions lie in the tangent space \(\left\{x\in\R^5\colon\lvert x\rvert=0,\omega\left(x\right)=0\right\}\), which has dimension \(3\). The chosen directions are linearly independent: for \(u,v,w\), the \(-2\)-coordinate forces the coefficient of \(u\) to vanish, then the \(2\)-coordinate forces the coefficient of \(w\) to vanish, and then the \(-1\)-coordinate forces the coefficient of \(v\) to vanish. The case \(u,v,w'\) is analogous, using the \(2\)-, \(-2\)-, and \(-1\)-coordinates. Therefore, the three chosen directions span the tangent space.

Each chosen direction has the form \(d=p-q\), where \(p,q\in\mathbb N^5\), \(|p|=|q|=2\), and \(\omega(p)=\omega(q)=s\). Explicitly,
\[
\begin{array}{c|c|c|c}
d&p&q&s\\
\hline
u=e_{-2}+e_2-2e_0&e_{-2}+e_2&2e_0&0\\
v=e_{-1}+e_1-2e_0&e_{-1}+e_1&2e_0&0\\
w=e_{-1}+e_2-e_0-e_1&e_{-1}+e_2&e_0+e_1&1\\
w'=e_{-2}+e_1-e_{-1}-e_0&e_{-2}+e_1&e_{-1}+e_0&-1
\end{array}
\]
For \(u\) and \(v\), \(s=0\), so \(b-s=b\). If \(b\ge0\), then \(b\in\{0,1,2\}\) and the third direction is \(w\), so \(b-s=b-1\in\{-1,0,1\}\). If \(b<0\), then \(b\in\{-2,-1\}\) and the third direction is \(w'\), so \(b-s=b+1\in\{-1,0\}\). Thus, \(b-s\in\{-2,-1,0,1,2\}\) in every case. In the next paragraph, fix one row of the table and use its \(d,p,q,s\).

Define \(Q\left(M\right)\coloneqq P\left(M+p\right)-P\left(M+q\right)\). Then \(Q\) has degree at most \(L-1\). Indeed, if \(P_L\) is the top homogeneous part of \(P\), then \(P_L\left(M+p\right)\) and \(P_L\left(M+q\right)\) have the same degree-\(L\) part, namely \(P_L\left(M\right)\), so the degree-\(L\) terms cancel.

If \(M\in\N^5\), \(\lvert M\rvert=2L-1\), and \(\omega\left(M\right)=b-s\), then both \(M+p\) and \(M+q\) lie in the integer slice \(\lvert N\rvert=2L+1\), \(\omega\left(N\right)=b\). Hence, \(Q\left(M\right)=0\) on that lower slice,
\[\left\{M\in\N^5\colon\lvert M\rvert=2L-1,\omega\left(M\right)=b-s\right\}.\]
By the induction hypothesis, \(Q\) vanishes identically on the whole real affine space \(\lvert M\rvert=2L-1\), \(\omega\left(M\right)=b-s\). Now if \(N\) lies in the real affine space \(\lvert N\rvert=2L+1\), \(\omega\left(N\right)=b\), then \(M=N-q\) lies in the preceding real affine space, 
\[\left\{M\in\R^5\colon\lvert M\rvert=2L-1,\omega\left(M\right)=b-s\right\}.\] Therefore, \(Q\left(N-q\right)=0\), i.e. \(P\left(N+d\right)=P\left(N\right)\) for all real \(N\) satisfying \(\lvert N\rvert=2L+1\) and \(\omega\left(N\right)=b\).

For a fixed \(N\) in this affine space and one of the chosen directions \(d\), set \(F\left(z\right)\coloneqq P\left(N+zd\right)\). Since \(d\) is tangent to the affine space, \(N+zd\) lies in the same affine space for every \(z\in\R\). Applying the identity \(P\left(M+d\right)=P\left(M\right)\) with \(M=N+zd\), we obtain \(F\left(z+1\right)=F\left(z\right)\) for every \(z\in\R\). Thus, \(F\) is a polynomial with period \(1\), and hence, \(F\) is constant. Therefore, \(P\left(N+zd\right)=P\left(N\right)\) for every \(z\in\R\). So \(P\) is invariant under every real multiple of each chosen direction. To see that this makes \(P\) constant on the affine slice, let \(N,N'\) lie in the slice. Then \(N'-N\) lies in the tangent space. Since the chosen directions span the tangent space, we may write
\[
N'-N=\alpha d_1+\beta d_2+\gamma d_3
\]
for the three chosen directions \(d_1,d_2,d_3\). Successively applying invariance along \(d_1,d_2,d_3\) delivers \(P(N')=P(N)\) and so \(P\) is constant on the affine slice.

The slice contains an integer point: if \(b\ne0\), take the count vector with \(N_b=1\), \(N_0=2L\), and all other counts zero; if \(b=0\), take the count vector with \(N_0=2L+1\) and all other counts zero. Since \(P\) vanishes on all integer points of the slice, the constant is \(0\). This proves the interpolation fact.

Returning to \(P_\varphi\), we have shown that \(P_\varphi\) vanishes identically on \(|N|=n,\omega(N)=0\). Let \(H_n\coloneqq\left\{N\in\mathbb R^5\colon |N|=n\right\}\). The restriction of the linear functional \(\omega\) to the affine space \(H_n\) is nonconstant; for instance,
\[
\omega\left(0,0,n,0,0\right)=0
\qquad \text{and} \qquad
\omega\left(0,0,n-1,1,0\right)=1.
\]
Choose affine coordinates \((z,h)\) on \(H_n\) with first coordinate \(z=\omega(N)\). In these coordinates the restricted polynomial has the form \(\tilde P(z,h)\) and satisfies \(\tilde P(0,h)=0\) for all \(h\). Viewing \(\tilde P\) as a polynomial in \(z\) whose coefficients are polynomials in \(h\), its constant coefficient is therefore, zero. Hence, \(\tilde P(z,h)=z\tilde R(z,h)\) for some polynomial \(\tilde R\). Equivalently, as functions on the hyperplane \(H_n\), there exists a polynomial \(R\) of degree at most \(k-1\) such that \(P_\varphi(N)=\omega(N)R(N)\).

Now consider the two lines \(N^{\left(2\right)}\left(s\right)\coloneqq\left(0,0,n-s,0,s\right)\) and \(N^{\left(1\right)}\left(s\right)\coloneqq\left(0,0,n-s,s,0\right)\). They satisfy \(\omega\left(N^{\left(2\right)}\left(s\right)\right)=2s\) and \(\omega\left(N^{\left(1\right)}\left(s\right)\right)=s\). Thus, \(P_\varphi\left(N^{\left(2\right)}\left(s\right)\right)=2sR\left(N^{\left(2\right)}\left(s\right)\right)\) and \(P_\varphi\left(N^{\left(1\right)}\left(s\right)\right)=sR\left(N^{\left(1\right)}\left(s\right)\right)\).

Let \(m_0\coloneqq\left(0,0,k,0,0\right)\) be the all-zero \(k\)-count type, and set \(f_0\coloneqq\varphi\left(m_0\right)\), \(f_r^{\left(2\right)}\coloneqq\varphi\left(m_r^{\left(2\right)}\right)\), and \(f_r^{\left(1\right)}\coloneqq\varphi\left(m_r^{\left(1\right)}\right)\) for \(1\le r\le k\). On the first line,
\[
P_\varphi\left(N^{\left(2\right)}\left(s\right)\right)=\sum_{r=0}^k\binom{s}{r}\binom{n-s}{k-r}f_r^{\left(2\right)},
\]
where \(f_0^{\left(2\right)}=f_0\). On the second line,
\[
P_\varphi\left(N^{\left(1\right)}\left(s\right)\right)=\sum_{r=0}^k\binom{s}{r}\binom{n-s}{k-r}f_r^{\left(1\right)},
\]
where \(f_0^{\left(1\right)}=f_0\). At \(N^{\left(0\right)}\coloneqq\left(0,0,n,0,0\right)\), all terms in \(P_\varphi\left(N^{\left(0\right)}\right)\) vanish except the all-zero \(k\)-count type, and, therefore, \(P_\varphi\left(N^{\left(0\right)}\right)=\binom{n}{k}f_0\). Since \(N^{\left(0\right)}\in\mathcal Z_n\), this value is \(0\). Hence, \(f_0=0\).

Using
\[
\left.\frac{d}{ds}\binom{s}{r}\right\rvert_{s=0}=\frac{\left(-1\right)^{r-1}}{r},\qquad (r\ge1),
\]
we obtain, for \(a\in\left\{1,2\right\}\),
\[
\left.\frac{d}{ds}P_\varphi\left(N^{\left(a\right)}\left(s\right)\right)\right\rvert_{s=0}=\sum_{r=1}^k\frac{\left(-1\right)^{r-1}}{r}\binom{n}{k-r}f_r^{\left(a\right)}.
\]
By the definition of \(d_r\),
\[
\sum_{r=1}^kd_rf_r^{\left(a\right)}=\frac{1}{\binom{n}{k-1}}\left.\frac{d}{ds}P_\varphi\left(N^{\left(a\right)}\left(s\right)\right)\right\rvert_{s=0}.
\]
Since both lines meet at \(N^{\left(0\right)}\), we get \(\left.\frac{d}{ds}P_\varphi\left(N^{\left(2\right)}\left(s\right)\right)\right\rvert_{s=0}=2R\left(N^{\left(0\right)}\right)\) and \(\left.\frac{d}{ds}P_\varphi\left(N^{\left(1\right)}\left(s\right)\right)\right\rvert_{s=0}=R\left(N^{\left(0\right)}\right)\). Therefore,
\[
\sum_{r=1}^kd_r\varphi\left(m_r^{\left(2\right)}\right)=\frac{2R\left(N^{\left(0\right)}\right)}{\binom{n}{k-1}}
\qquad \text{and} \qquad
\sum_{r=1}^kd_r\varphi\left(m_r^{\left(1\right)}\right)=\frac{R\left(N^{\left(0\right)}\right)}{\binom{n}{k-1}}.
\]
Consequently,
\[
\begin{aligned}
\left\langle q,\varphi\right\rangle&=\sum_{r=1}^kd_r\varphi\left(m_r^{\left(2\right)}\right)-2\sum_{r=1}^kd_r\varphi\left(m_r^{\left(1\right)}\right)\\
&=\frac{2R\left(N^{\left(0\right)}\right)}{\binom{n}{k-1}}-2\frac{R\left(N^{\left(0\right)}\right)}{\binom{n}{k-1}}=0.
\end{aligned}
\]
Thus, \(q\) annihilates \(\ker K^*\), and hence, \(q\in\operatorname{im}K\).
\end{proof}

\begin{lemma}\label{lem:anomaly}The coefficients \(d_r\) satisfy \(\sum_{r=1}^krd_r=\frac{k+2}{2k+1}\ne0\).
\end{lemma}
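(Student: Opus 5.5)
The plan is a direct computation. Multiplying \(d_r\) by \(r\) cancels the factor \(1/r\) in its definition, so
\[
\sum_{r=1}^k r d_r=\frac{1}{\binom{2k+1}{k-1}}\sum_{r=1}^k\left(-1\right)^{r-1}\binom{2k+1}{k-r}.
\]
The whole task is therefore to evaluate this alternating partial sum of a row of Pascal's triangle in closed form, and then divide.

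First I reindex with \(j\coloneqq k-r\), so that \(j\) runs from \(0\) to \(k-1\). Since \(\left(-1\right)^{r-1}=\left(-1\right)^{k-1}\left(-1\right)^{j}\), the sum becomes
\[
\left(-1\right)^{k-1}\sum_{j=0}^{k-1}\left(-1\right)^j\binom{2k+1}{j}.
\]

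Next I use the standard partial alternating sum identity
\[
\sum_{j=0}^{m}\left(-1\right)^j\binom{N}{j}=\left(-1\right)^m\binom{N-1}{m}.
\]
It is proved by a one-line induction on \(m\) using Pascal's rule \(\binom{N}{j}=\binom{N-1}{j}+\binom{N-1}{j-1}\), which makes the sum telescope. Applying it with \(N=2k+1\) and \(m=k-1\) gives \(\left(-1\right)^{k-1}\binom{2k}{k-1}\). The two signs \(\left(-1\right)^{k-1}\) then cancel, so the alternating sum equals \(\binom{2k}{k-1}\).

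Finally I compute the ratio
\[
\frac{\binom{2k}{k-1}}{\binom{2k+1}{k-1}}.
\]
Writing both binomials in factorials,
\[
\binom{2k+1}{k-1}=\binom{2k}{k-1}\cdot\frac{2k+1}{\left(2k+1\right)-\left(k-1\right)}=\binom{2k}{k-1}\cdot\frac{2k+1}{k+2},
\]
so the ratio is \(\frac{k+2}{2k+1}\). This is positive for every \(k\ge1\), which gives the nonvanishing claim.

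There is no real obstacle here. The only step needing care is the sign bookkeeping in the reindexing, together with checking the edge case \(k=1\): there the sum is \(d_1=\binom{3}{0}/\binom{3}{0}=1\), which matches \(\frac{3}{3}\).
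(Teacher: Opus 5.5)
Your proposal is correct and follows the paper's proof step for step: the same cancellation of \(1/r\), the same reindexing \(j=k-r\), the same partial alternating-sum identity (written in your sign convention rather than the paper's \(\sum_{j=0}^{m}(-1)^{m-j}\binom{N}{j}=\binom{N-1}{m}\)), and the same final ratio \(\binom{2k}{k-1}/\binom{2k+1}{k-1}=\frac{k+2}{2k+1}\). Your sign bookkeeping and the \(k=1\) check are both right.
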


\begin{proof}
By definition,
\[
\sum_{r=1}^krd_r=\frac{1}{\binom{2k+1}{k-1}}\sum_{r=1}^k\left(-1\right)^{r-1}\binom{2k+1}{k-r}.
\]
Set \(j\coloneqq k-r\). Then
\[
\sum_{r=1}^k\left(-1\right)^{r-1}\binom{2k+1}{k-r}=\sum_{j=0}^{k-1}\left(-1\right)^{k-1-j}\binom{2k+1}{j}.
\]
The standard identity \(\sum_{j=0}^{m}\left(-1\right)^{m-j}\binom{N}{j}=\binom{N-1}{m}\) yields
\[
\sum_{j=0}^{k-1}\left(-1\right)^{k-1-j}\binom{2k+1}{j}=\binom{2k}{k-1}.
\]
Therefore,
\[
\sum_{r=1}^krd_r=\frac{\binom{2k}{k-1}}{\binom{2k+1}{k-1}}=\frac{k+2}{2k+1}. \qedhere
\]
\end{proof}

\subsection{From signed measures to probability laws}

We next explain how to turn a signed measure on the zero-sum hyperplane into two probability laws. Let \(c\coloneqq2/\pi\). Throughout this subsection we use the convention \(u\log\lvert u\rvert=0\) at \(u=0\). Define \(\Phi\left(u\right)\coloneqq\lvert u\rvert+icu\log\lvert u\rvert\). Thus, \(\Phi\left(0\right)=0\).

We use Nolan's \(S\left(\alpha,\beta,\gamma,\delta;1\right)\) parametrization of stable laws. By Nolan \cite[\S1.3, Definition~1.8, Eq.~(1.6)]{Nolan2020BasicProperties}, if \(Z\sim S\left(1,\beta,g,\delta;1\right)\), then
\[
\E e^{iuZ}=\exp\left\{-g\lvert u\rvert\left(1+i\beta\frac{2}{\pi}\sgn\left(u\right)\log\lvert u\rvert\right)+i\delta u\right\},
\]
with the convention \(0\log0=0\). Plugging in \(\beta=1\), \(g=\lambda\), and \(\delta=0\), and using \(\lvert u\rvert\sgn\left(u\right)=u\), produces
\[
\E e^{iuZ}=\exp\left\{-\lambda\left(\lvert u\rvert+\frac{2i}{\pi}u\log\lvert u\rvert\right)\right\}=\exp\left\{-\lambda\Phi\left(u\right)\right\}.
\]
Consequently, for every \(\lambda>0\), \(\exp\left\{-\lambda\Phi\left(u\right)\right\}\) is the characteristic function of a one-dimensional \(1\)-stable law.

Hence, if \(\Gamma\) is a finite positive measure on \(\R^n\) with finite support and \(\gamma\in\R^n\), then there is a probability law on \(\R^n\), \(\mu_{\Gamma,\gamma}\), whose characteristic function is
\[
\hat\mu_{\Gamma,\gamma}\left(t\right)=\exp\left(i\gamma\cdot t-\int_{\R^n}\Phi\left(t\cdot a\right)\,d\Gamma\left(a\right)\right).
\]
Indeed, if \(\Gamma=\sum_j\lambda_j\delta_{a_j}\), take independent one-dimensional random variables \(Z_j\) with characteristic functions \(\exp\left\{-\lambda_j\Phi\left(u\right)\right\}\). Then \(\gamma+\sum_ja_jZ_j\) has the displayed characteristic function.

\begin{proposition}\label{prop:stable} Let \(n\ge2\), and let \(H\coloneqq\left\{a\in\R^n\colon\sum_{i=1}^na_i=0\right\}\). Let \(\Lambda\) be a finitely supported signed measure on \(H\). Write \(\Lambda=\Lambda^+-\Lambda^-\) for its Jordan decomposition, and understand all integrals against \(\Lambda\) as signed integrals.\footnote{Since \(\Lambda\) has finite support, no integrability issue arises.}

Fix \(1\le k<n\). Suppose there exists \(\ell=\left(\ell_1,\ldots,\ell_n\right)\in\R^n\) such that, for every \(T\subseteq\left[n\right]\) with \(\lvert T\rvert=k\) and every \(t\in\R^T\),
\[
\int_H\left\lvert t\cdot a_T\right\rvert\,d\Lambda\left(a\right)=0 \qquad \text{and} \qquad
\int_H\left(t\cdot a_T\right)\log\left\lvert t\cdot a_T\right\rvert\,d\Lambda\left(a\right)=t\cdot\ell_T.
\]
If \(\ell\cdot\1\ne0\), then there exist random vectors \(X^+,X^-\in\R^n\) such that \(\left(X_i^+\right)_{i\in T}\stackrel{d}=\left(X_i^-\right)_{i\in T}\) for every \(T\subseteq\left[n\right]\) with \(\lvert T\rvert=k\), but
\[
\E\sum_{i=1}^nX_i^+\ne\E\sum_{i=1}^nX_i^-.
\]
Moreover, both sums are deterministic constants.
\end{proposition}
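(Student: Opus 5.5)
Set \(X^+\coloneqq\) a random vector with law \(\mu_{\Lambda^+,0}\) and \(X^-\coloneqq\) one with law \(\mu_{\Lambda^-,\ell}\) (or split the shift differently; only the difference matters). Both are well defined by the construction preceding the proposition, since \(\Lambda^\pm\) are finite positive measures with finite support. Concretely, \(X^+=\sum_j a_jZ_j\) with \(a_j\) ranging over the support of \(\Lambda^+\), and \(X^-=\ell+\sum_j b_jZ'_j\) with \(b_j\) over the support of \(\Lambda^-\). The plan has two steps: show that the \(k\)-marginals agree via characteristic functions, and compute the sums directly.

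For the marginals, fix \(T\) with \(\lvert T\rvert=k\) and \(t\in\R^T\). Extend \(t\) by zeros to \(\tilde t\in\R^n\), so that \(\tilde t\cdot a=t\cdot a_T\). The characteristic function of \((X^\pm_i)_{i\in T}\) at \(t\) is \(\hat\mu(\tilde t)\), so
\[
\frac{\E e^{it\cdot X^+_T}}{\E e^{it\cdot X^-_T}}=\exp\Bigl(-i\,t\cdot\ell_T-\int_H\Phi\left(t\cdot a_T\right)d\Lambda(a)\Bigr).
\]
By hypothesis, \(\int\Phi(t\cdot a_T)\,d\Lambda=0+ic\,t\cdot\ell_T\). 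Hence the exponent is \(-i\,t\cdot\ell_T(1+c)\), which is wrong by a factor. So I would fix the shift to be \(\gamma^-=c\,\ell\) rather than \(\ell\), choosing signs so that the exponent cancels exactly. Concretely, take \(X^+\sim\mu_{\Lambda^+,0}\) and \(X^-\sim\mu_{\Lambda^-,\gamma}\) with \(\gamma\) chosen so that \(i\gamma_T\cdot t-\int\Phi\,d\Lambda^-= -\int\Phi\,d\Lambda^+\), that is, \(\gamma_T\cdot t=-c\,t\cdot\ell_T\), so \(\gamma=-c\ell\). Then the characteristic functions of all \(k\)-marginals coincide, and the marginals agree by uniqueness. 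Since \(k<n\), sets of size less than \(k\) also agree, by projection.

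For the sums, \(\sum_i X_i^+=\sum_j (a_j\cdot\1)Z_j=0\) almost surely, because every \(a_j\in H\). Likewise \(\sum_iX_i^-=\gamma\cdot\1+0=-c\,\ell\cdot\1\). Both sums are therefore deterministic, their expectations are defined, and they differ because \(c\ne0\) and \(\ell\cdot\1\ne0\).

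The main obstacle is the bookkeeping of the sign and constant in the shift, namely checking \(\Phi(u)=\lvert u\rvert+icu\log\lvert u\rvert\) against the hypothesis so that the imaginary parts cancel exactly. Everything else follows directly from Lévy uniqueness and \(a\cdot\1=0\) on \(H\).
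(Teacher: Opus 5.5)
Your final argument is correct and is essentially the paper's proof. Taking \(\gamma^+=0\) and \(\gamma^-=-c\ell\) is a special case of the paper's choice \(\gamma^+-\gamma^-=c\ell\); the \(k\)-marginal characteristic functions then coincide, and the sums are the constants \(\gamma^\pm\cdot\1\), which differ by \(c\,\ell\cdot\1\ne0\). You differ only cosmetically, reading the degenerate sums off the explicit representation \(\gamma+\sum_j a_jZ_j\) rather than from the characteristic function of the sum, and in a write-up you should drop the false start with shift \(\ell\).
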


\begin{proof}
Let \(c\coloneqq2/\pi\). Choose location vectors \(\gamma^+,\gamma^-\in\R^n\) satisfying \(\gamma^+-\gamma^-=c\ell\). Let \(X^+\sim\mu_{\Lambda^+,\gamma^+}\) and \(X^-\sim\mu_{\Lambda^-,\gamma^-}\).

Fix \(T\subseteq\left[n\right]\) with \(\lvert T\rvert=k\). For \(t\in\R^T\), the characteristic functions of the \(T\)-marginals have exponents
\[
i\gamma_T^+\cdot t-\int_H\Phi\left(t\cdot a_T\right)\,d\Lambda^+\left(a\right) \quad \text{and} \quad
i\gamma_T^-\cdot t-\int_H\Phi\left(t\cdot a_T\right)\,d\Lambda^-\left(a\right).
\]
Subtracting the second exponent from the first produces
\[
i\left(\gamma_T^+-\gamma_T^-\right)\cdot t-\int_H\Phi\left(t\cdot a_T\right)\,d\Lambda\left(a\right).
\]
By assumption, \(\int_H\Phi\left(t\cdot a_T\right)\,d\Lambda\left(a\right)=ic\,t\cdot\ell_T\). Since \(\gamma^+-\gamma^-=c\ell\), the exponent difference is \(0\). Hence, the \(T\)-marginal characteristic functions agree, and therefore, \(\left(X_i^+\right)_{i\in T}\stackrel{d}=\left(X_i^-\right)_{i\in T}\).

Now let \(\1=\left(1,\ldots,1\right)\). Since \(\Lambda^\pm\) are supported on \(H\), \(a\cdot\1=0\) for every \(a\in\operatorname{supp}\Lambda^\pm\). Therefore, for \(s\in\R\),
\[
\E\exp\left(is\sum_{i=1}^nX_i^\pm\right)=\exp\left(is\gamma^\pm\cdot\1\right).
\]
The right side is the characteristic function of the point mass at \(\gamma^\pm\cdot\mathbf 1\). By uniqueness of characteristic functions,
\(\sum_{i=1}^nX_i^\pm=\gamma^\pm\cdot\mathbf 1\)
a.s. In particular, the two expectations are finite and
\[
\E\sum_{i=1}^nX_i^+-\E\sum_{i=1}^nX_i^-=\left(\gamma^+-\gamma^-\right)\cdot\1=c\ell\cdot\1,
\]
which is nonzero by assumption.
\end{proof}

\subsection{The lower bound in all dimensions}

\begin{lemma}[Odd-dimensional lower bound]
For every \(k\ge1\), \(\sigma\left(2k+1\right)>k\).
\end{lemma}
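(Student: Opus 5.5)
The plan is to feed the finite signed object of Lemma~\ref{lem:finex} into Proposition~\ref{prop:stable} with \(n=2k+1\). From the weights \((W_N)_{N\in\mathcal Z_n}\), define a finitely supported signed measure \(\Lambda\) on \(A^n\subset\R^n\). Each word \(a\in A^n\) whose count vector is \(N\in\mathcal Z_n\) gets mass \(W_N/\binom{n}{N}\), where \(\binom{n}{N}\) is the multinomial coefficient. Since every such \(N\) has \(\omega(N)=0\), \(\Lambda\) is supported on \(H\). By construction \(\Lambda\) is exchangeable, so it suffices to compute its pushforward \(\rho\) onto a fixed set \(T\) of \(k\) coordinates.

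For a word \(b\in A^k\) of type \(m\), the number of completions to a word of type \(N\) is \(\binom{n-k}{N-m}\). Hence
\[
\rho(b)=\sum_N W_N\frac{\binom{n-k}{N-m}}{\binom{n}{N}}=\frac{\prod_a m_a!}{(n)_k}\sum_N W_N\prod_a\binom{N_a}{m_a}=\frac{\prod_a m_a!}{(n)_k}\,q(m),
\]
using Lemma~\ref{lem:finex}. Equivalently, \(\rho\) spreads total mass \(\frac{k!}{(n)_k}q(m)\) uniformly over the \(k!/\prod_a m_a!\) words of type \(m\). So, up to the positive constant \(C\coloneqq k!/(n)_k\), \(\rho\) is supported on two kinds of words indexed by \(S\subseteq T\) with \(|S|=r\ge1\):
\begin{itemize}
\item the word \(2\cdot\1_S\), with weight \(d_r/\binom{k}{r}\);
\item the word \(\1_S\), with weight \(-2d_r/\binom{k}{r}\).
\end{itemize}
Getting this normalization exactly right is the main bookkeeping point. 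The key observation is that \(q(m)\) is the total mass of type \(m\), so the per-word weight inside a type is uniform.

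Next I verify the two hypotheses of Proposition~\ref{prop:stable}. Fix \(t\in\R^T\) and write \(s_S\coloneqq\sum_{i\in S}t_i\). For a fixed \(S\), the contribution to \(\int|t\cdot a_T|\,d\Lambda\) is
\[
\frac{Cd_r}{\binom kr}\bigl(|2s_S|-2|s_S|\bigr)=0 .
\]
The contribution to the logarithmic integral is
\[
\frac{Cd_r}{\binom kr}\bigl(2s_S\log|2s_S|-2s_S\log|s_S|\bigr)=\frac{Cd_r}{\binom kr}\,2\log 2\; s_S .
\]
This holds also when \(s_S=0\), by the convention \(0\log0=0\). Summing over the \(S\) of size \(r\) gives \(\sum_{|S|=r}s_S=\binom{k-1}{r-1}\sum_{i\in T}t_i\), and \(\binom{k-1}{r-1}/\binom kr=r/k\). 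Therefore
\[
\int_H(t\cdot a_T)\log|t\cdot a_T|\,d\Lambda(a)=\frac{2C\log 2}{k}\Bigl(\sum_{r=1}^k rd_r\Bigr)\sum_{i\in T}t_i .
\]
This equals \(t\cdot\ell_T\) with \(\ell\coloneqq\kappa\1\), where \(\kappa\coloneqq\frac{2C\log2}{k}\sum_r rd_r\). The vector \(\ell\) is the same for every \(T\), by exchangeability. By Lemma~\ref{lem:anomaly}, \(\kappa=\frac{2C\log 2}{k}\cdot\frac{k+2}{2k+1}\ne0\), so \(\ell\cdot\1=n\kappa\ne0\).

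Proposition~\ref{prop:stable} then yields \(X^+,X^-\) with equal \(T\)-marginals for every \(|T|=k\). Every subcollection of at most \(k\) coordinates is a projection of some \(k\)-subset marginal, because \(k<n\). Hence all marginals of dimension at most \(k\) agree. Both sums are a.s. constant, so their expectations are defined, and they are unequal. Thus the extended expectation of the sum is not determined by \(k\)-marginals, which gives \(\sigma(2k+1)>k\).
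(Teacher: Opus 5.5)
Your proposal is correct and follows the paper's proof essentially step for step. The paper multiplies \(\Lambda\) by \(\binom{n}{k}\) so that its \(T\)-projection is exactly \(q\) spread uniformly over arrangements, while you carry the harmless positive constant \(C=k!/(n)_k=1/\binom{n}{k}\) and obtain the projection by counting completions instead of through the hypergeometric law; neither difference affects the argument.
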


\begin{proof}
Set \(n\coloneqq2k+1\). By the finite extension lemma (\ref{lem:finex}), choose signed weights \(\left(W_N\right)_{N\in\mathcal Z_n}\) such that \(q\left(m\right)=\sum_{N\in\mathcal Z_n}W_N\prod_{a\in A}\binom{N_a}{m_a}\).

For \(N\in\mathcal Z_n\), let \(\left[N\right]\) denote the uniform probability measure on all vectors \(a\in A^n\) with count vector \(N\). Define the signed measure
\[
\Lambda\coloneqq\binom{n}{k}\sum_{N\in\mathcal Z_n}W_N\left[N\right].
\]
Since \(N\in\mathcal Z_n\) implies \(\omega\left(N\right)=0\), the measure \(\Lambda\) is supported on \(H=\left\{a\in\R^n\colon\sum_{i=1}^na_i=0\right\}\).

Fix \(T\subseteq\left[n\right]\) with \(\lvert T\rvert=k\). We first compute the \(T\)-projection of \(\Lambda\). Under \(\left[N\right]\), the projected count type \(m\) on \(T\) has hypergeometric mass \(\binom{n}{k}^{-1}\prod_{a\in A}\binom{N_a}{m_a}\). Conditional on this count type, the arrangements on \(T\) are uniform. Hence, after multiplying by the prefactor \(\binom{n}{k}\) in the definition of \(\Lambda\), each arrangement of type \(m\) has signed mass \(q\left(m\right)/\#\left\{\text{arrangements of type }m\right\}\). For the two nonzero families of count types, the number of arrangements is \(\binom{k}{r}\). Therefore, the pushforward of \(\Lambda\) under the projection map \(\pi_T\)
\[
\left(\pi_T\right)_\#\Lambda=\sum_{r=1}^k\frac{d_r}{\binom{k}{r}}\sum_{\substack{R\subseteq T\\\lvert R\rvert=r}}\left(\delta_{2\1_R}-2\delta_{\1_R}\right),
\]
where \(\1_R\in\R^T\) is the indicator vector of \(R\).

Let \(t\in\R^T\) and write \(t_R\coloneqq\sum_{i\in R}t_i\). For each \(R\), \(\lvert2t_R\rvert-2\lvert t_R\rvert=0\). Therefore, \(\int_H\lvert t\cdot a_T\rvert\,d\Lambda\left(a\right)=0\). For the logarithmic term (that is, for the integral of \(\left(t\cdot a_T\right)\log\lvert t\cdot a_T\rvert\) against \(\Lambda\)), each pair \(\delta_{2\1_R}-2\delta_{\1_R}\) contributes
\[
\left(2t_R\right)\log\left\lvert2t_R\right\rvert-2t_R\log\left\lvert t_R\right\rvert=2t_R\log2,
\]
with the convention \(u\log\lvert u\rvert=0\) at \(u=0\). Thus,
\[\int_H\left(t\cdot a_T\right)\log\left\lvert t\cdot a_T\right\rvert\,d\Lambda\left(a\right) =2\log2\sum_{r=1}^k\frac{d_r}{\binom{k}{r}}\sum_{\substack{R\subseteq T\\\lvert R\rvert=r}}t_R.\]
Since \(\sum_{\substack{R\subseteq T\\\lvert R\rvert=r}}t_R=\binom{k-1}{r-1}\sum_{i\in T}t_i\), we get
\[
\int_H\left(t\cdot a_T\right)\log\left\lvert t\cdot a_T\right\rvert\,d\Lambda\left(a\right)=\lambda_k\sum_{i\in T}t_i,
\]
where
\[
\lambda_k\coloneqq2\log2\sum_{r=1}^k\frac{d_r}{\binom{k}{r}}\binom{k-1}{r-1}.
\]
Using \(\binom{k-1}{r-1}/\binom{k}{r}=r/k\), we find
\[
\lambda_k=\frac{2\log2}{k}\sum_{r=1}^krd_r.
\]
By the anomaly lemma (\ref{lem:anomaly}), \(\sum_{r=1}^krd_r=\frac{k+2}{2k+1}\). Hence,
\[
\lambda_k=\frac{2\left(k+2\right)}{k\left(2k+1\right)}\log2\ne0.
\]

Consequently, the hypotheses of the stable realization proposition (\ref{prop:stable}) hold with \(\ell=\lambda_k\left(1,\ldots,1\right)\). Since \(\ell\cdot\1=\left(2k+1\right)\lambda_k=\frac{2\left(k+2\right)}{k}\log2\ne0\), there exist random vectors \(X^+,X^-\in\R^{2k+1}\) with identical \(k\)-dimensional marginals but with \(\E\sum_{i=1}^{2k+1}X_i^+\ne\E\sum_{i=1}^{2k+1}X_i^-\). Equality of all \(k\)-dimensional marginals implies equality of all lower-dimensional marginals. Therefore, \(k\)-marginals do not determine the extended expectation of the coordinate sum in dimension \(2k+1\).
\end{proof}

\begin{lemma}[Even-dimensional lower bound]
For every \(k\ge1\), \(\sigma\left(2k+2\right)>k\).
\end{lemma}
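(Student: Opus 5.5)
The plan is to reduce the even case \(n=2k+2\) to the odd case \(2k+1\) by padding with a deterministic coordinate. Take the pair \(X^+,X^-\in\R^{2k+1}\) produced in the proof of the odd-dimensional lower bound. These vectors have identical \(k\)-dimensional marginals. Their coordinate sums are a.s. deterministic constants, by Proposition~\ref{prop:stable}, and these constants differ. Define \(\tilde X^\pm\coloneqq\left(X^\pm_1,\ldots,X^\pm_{2k+1},0\right)\in\R^{2k+2}\). We then check the two required properties in turn.

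\textbf{Marginals.} Let \(S\subseteq\left[2k+2\right]\) with \(\lvert S\rvert\le k\).
\begin{itemize}
\item If \(2k+2\notin S\), then \(\left(\tilde X^\pm_i\right)_{i\in S}\) are marginals of \(X^\pm\) of dimension at most \(k\). These agree, because equality of all \(k\)-marginals implies equality of all lower-dimensional ones.
\item If \(2k+2\in S\), write \(S=S'\cup\{2k+2\}\) with \(\lvert S'\rvert\le k-1\). Then \(\left(\tilde X^\pm_i\right)_{i\in S}\) is the image of \(\left(X^\pm_i\right)_{i\in S'}\) under the map that appends a zero coordinate. Since the inputs agree in law, so do the images.
\end{itemize}

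\textbf{Sums.} We have \(\sum_{i=1}^{2k+2}\tilde X^\pm_i=\sum_{i=1}^{2k+1}X^\pm_i\) a.s. These are distinct deterministic constants, so both expectations are defined, finite and unequal. Hence \(k\)-marginals do not determine the expectation of the sum in dimension \(2k+2\), and \(\sigma(2k+2)>k\).

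There is no real obstacle here. The only point needing care is that the definition requires agreement of all marginals of size at most \(k\), including those that involve the padded coordinate. The second case above handles this, since only \(k-1\) original coordinates then appear.

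One might worry that a zero coordinate is too degenerate, but the definition puts no restriction on the random vectors. If a nondegenerate example were wanted, one could instead append an independent copy of any integrable variable to both vectors. That shifts both expectations by the same amount and, by independence, preserves equality of the marginals.
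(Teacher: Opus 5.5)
Your proposal is correct and is essentially the paper's proof: you pad the odd-dimensional counterexample with a deterministic zero coordinate, check the marginals by splitting on whether the padded coordinate is included (so at most \(k-1\) original coordinates appear), and note that the coordinate sums are unchanged. Allowing \(S'=\emptyset\) also covers the case \(k=1\), which the paper treats separately.
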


\begin{proof}
By the odd-dimensional lower bound, there exist random vectors \(X^+,X^-\in\R^{2k+1}\) with the same \(k\)-marginals but with different expectations of their coordinate sums. Define \(\tilde X^+\coloneqq\left(X_1^+,\ldots,X_{2k+1}^+,0\right)\) and \(\tilde X^-\coloneqq\left(X_1^-,\ldots,X_{2k+1}^-,0\right)\). Then \(\tilde X^+\) and \(\tilde X^-\) have the same \(k\)-marginals in dimension \(2k+2\). Indeed, if a \(k\)-coordinate set does not include the last coordinate, this follows directly from the construction. If it does include the last coordinate and \(k\ge2\), then the remaining \(k-1\) original coordinates have equal laws because the original vectors have equal lower-dimensional marginals, obtained by marginalizing their \(k\)-marginals. If \(k=1\), the selected coordinate is just the added deterministic coordinate \(0\) for both vectors. Their coordinate sums differ by the same amount as before. Therefore, \(k\)-marginals do not determine the extended expectation of the coordinate sum in dimension \(2k+2\).
\end{proof}

\begin{proof}[Proof of the main theorem]
For every \(n\ge2\), we have the upper bound \(\sigma\left(n\right)\le\left\lceil n/2\right\rceil\). If \(n=2\), by Simons's theorem, \(\sigma\left(2\right)\le1\); while by definition \(\sigma\left(2\right)\ge1\). Hence, \(\sigma\left(2\right)=1\).

Now let \(n=2k+1\) with \(k\ge1\). By the odd-dimensional lower bound, \(\sigma\left(2k+1\right)>k\), while the upper bound is \(\sigma\left(2k+1\right)\le k+1\). Therefore, \(\sigma\left(2k+1\right)=k+1=\left\lceil\left(2k+1\right)/2\right\rceil\).

Finally, let \(n=2k+2\) with \(k\ge1\). By the even-dimensional lower bound, \(\sigma\left(2k+2\right)>k\), while the upper bound is \(\sigma\left(2k+2\right)\le k+1\). Therefore, \(\sigma\left(2k+2\right)=k+1=\left\lceil\left(2k+2\right)/2\right\rceil\). Thus, for every \(n\ge2\), \(\sigma\left(n\right)=\left\lceil n/2\right\rceil\).
\end{proof}

\bibliographystyle{amsplain}
\bibliography{references}

\end{document}